% 25 decembre 2012 T
% 18 decembre 2012 T
% 24 septembre 2011 Michel (ajout remarque T fini)
% 9 septembre 2011 Tullio VERSION de arXiv
% 5 September 2011 Michel maison
% 28 August 2011 Michel maison
% LaTeX 2e
% Authors: T. Ceccherini-Silberstein and M. Coornaert
% Title: "Sensitivity and Devaney's chaos in uniform spaces"  
\documentclass[12pt,a4paper]{amsart}
\usepackage{amsmath, amssymb, amsfonts,enumerate}
\usepackage[all]{xy}
\usepackage{amscd}

\newtheorem{theorem}{Theorem}[section]
\newtheorem{lemma}[theorem]{Lemma}

\newtheorem{proposition}[theorem]{Proposition}

 \theoremstyle{definition}
 \newtheorem{definition}[theorem]{Definition}

\numberwithin{equation}{section}
 %% a field
 %% positive integers
\newcommand {\Z}{\mathbb{Z}} %% integers
 %% reals
 %% rationals
 %% complex

 %% sphere unite

\newcommand{\PP}{\mathcal{P}}

\newcommand{\UU}{\mathcal{U}}

\DeclareMathOperator{\Stab}{Stab}

\begin{document}
\title{Sensitivity and Devaney's chaos in uniform spaces}
\author{Tullio Ceccherini-Silberstein}
\address{Dipartimento di Ingegneria, Universit\`a del Sannio, C.so
Garibaldi 107, 82100 Benevento, Italy}
\email{tceccher@mat.uniroma3.it}
\author{Michel Coornaert}
\address{Institut de Recherche Math\'ematique Avanc\'ee,
UMR 7501,                                             Universit\'e  de Strasbourg et CNRS,
                                                 7 rue Ren\'e-Descartes,
                                               67000 Strasbourg, France}
\email{coornaert@math.unistra.fr}
\subjclass[2000]{37D45, 37B05, 54E15}
\keywords{dynamical system, uniform space, chaos, Devaney's definition of chaos, density of periodic points, topologically transitive, topologically mixing, sensitive}
\begin{abstract}
We give sufficient conditions for sensitivity of continuous group actions on uniform spaces. 
\end{abstract}
\maketitle

% SECTION 1
\section{Introduction}

Sensitivity is an important property of chaotic dynamical systems
which is related to what  was  popularized  as the ``butterfly effect" in the 1970s.
When the phase space is equipped with a metric, the definition for group actions goes as follows.
One says that an action of a group $G$
on a metric space $(X,d)$    
is  \emph{sensitive} if there exists a constant $\varepsilon > 0$ such that, 
 for every point $x \in X$ and every neighborhood $N$ of $x$, there exist a point $y \in N$    and an element $g \in G$ such that $d(gx,gy) \geq \varepsilon$ (cf.  \cite{banks}, 
\cite{devaney}, \cite{glasner-weiss-sensitive}, \cite{katok-hasselblatt-first-course},  
\cite{kontorovich}, \cite{silverman}).
 
The goal of this note is to give a definition of sensitivity in the case when the phase space $X$ is only equipped with an uniform structure instead of a metric
and to extend to this more general setting two classical results providing sufficient conditions for sensitivity.  It turns out that this extension is quite natural and we think it may help to a better
understanding of the roles of the various topological and dynamical concepts involved.
\par
The theory of uniform spaces was developed in \cite{weil-uniforme}. A uniform space is a set $X$  equipped with a uniform structure, i.e., a set $\UU$ consisting of subsets of the Cartesian square $X \times X$. These subsets are called the \emph{entourages} of the uniform space and are  required to verify a certain list of axioms (see Section \ref{s:background}).  Heuristically, two points $x, y \in X$ are ``close" when the pair $(x,y)$ belongs to a ``small" entourage.
Sensitivity for group actions on uniform spaces may be defined as follows.  

 \begin{definition}
 One says that the action of a group $G$ on a uniform space $(X,\UU)$  has \emph{sensitive dependence on initial conditions}, or more briefly  that the action  is \emph{sensitive}, 
 if there is an entourage $U \in \UU$ such that, for all $x \in X$
and every neighborhood $N$ of $x$, there exist a point $y \in N$ and an element $g \in G$ such that $(gx,gy) \notin U$.
Such an entourage $U$ is then called a \emph{sensitivity entourage} for the dynamical system 
$(X,G)$.
 \end{definition}

When the uniform structure comes from a metric, this definition is equivalent to the one given above.
\par
Note that every uniform space admitting a sensitive action must be \emph{perfect}, i.e., without isolated points.
In particular, every Hausdorff uniform space admitting a sensitive action is infinite.
\par
Sensitivity is a weak local version of expansivity.
We recall  that  the action of a group $G$ on a uniform space $(X,\UU)$ 
is called  \emph{expansive} if there is a an entourage $U \in \UU$ such that, for all distinct points 
$x, y \in X$,
  there exists an element $g \in G$ such that $(gx,gy) \notin U$.
Such an entourage $U$ is then called an \emph{expansitivity entourage} for the dynamical system $(X,G)$.
 It is clear that if the action of a group $G$ on a perfect uniform space is expansive then this action is also sensitive (any expansitivity entourage is a sensitivity entourage).
Note that if a group $G$ acts expansively on a uniform space $X$ 
and $Y \subset X$ is a $G$-invariant subset, then the action of $G$ on $Y$ is also expansive 
so that it is sensitive if and only if $Y$ is perfect. 
\par
Recall the following standard definitions.
Let $X$ be a set equipped with an action of a group $G$.
The orbit of a point $x \in X$ is the subset $Gx = \{gx : g \in G\} \subset X$ and its \emph{stabilizer}
is the subgroup  
$\Stab_G(x) = \{ g \in G : gx = x\} \subset G$.
A point $x \in X$ is called \emph{periodic} if
its orbit is finite. Equivalently, $x$ is periodic if and only if its stabilizer is of finite index in $G$.
Suppose now that  $X$ is a topological space.
One says that the action of $G$ on $X$ is  \emph{continuous} if, for each $g \in G$, the map $x \mapsto gx$ is continuous on $X$.
The action of $G$ on $X$ is called
\emph{topologically transitive} 
 if, given any two non-empty open subsets $V$ and $W$ of $X$, there exists an element $g \in G$ such that $g V$ meets $W$. 
Our main result is the following.

\begin{theorem}
\label{t:main-theorem}
Let $X$ be an infinite Hausdorff uniform space equipped with a continuous and topologically transitive action of a group $G$. 
Suppose in addition that  $X$ admits a dense set of periodic points.
Then the action of $G$ on $X$ is sensitive.
 \end{theorem}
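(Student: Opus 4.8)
The plan is to adapt the classical argument of Banks--Brooks--Cairns--Davis--Stacey (which shows, for a single continuous self-map of a metric space, that topological transitivity together with density of periodic points forces sensitivity) to the present setting of group actions on uniform spaces. Two ingredients drive the proof. The first is a purely ``geometric'' fact: there is an entourage that simultaneously witnesses that every point of $X$ is far from the orbit of some periodic point. The second is ``dynamical'': given any point $x$ and neighborhood $N$, I would use density of periodic points to place a periodic point $p$ inside $N$ near $x$, and then use topological transitivity and continuity to produce a second point $y \in N$ whose orbit is dragged close to the far orbit supplied by the first ingredient. Comparing the two then yields the required separation. Throughout I would replace metric balls by the neighborhoods $U[a] = \{z : (a,z) \in U\}$, triangle inequalities by composition of entourages, and ``halving a distance'' by choosing a symmetric entourage $E$ with $E \circ E \subseteq U$; iterating the latter gives, for any entourage $V_0$, a symmetric entourage $E$ with $E \circ E \circ E \circ E \subseteq V_0$.

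For the first ingredient, I would first check that $X$ contains two periodic points $q_1, q_2$ lying in distinct (hence disjoint) orbits $O_1 = Gq_1$ and $O_2 = Gq_2$. Indeed, the set $P$ of periodic points is dense, so it must be infinite: otherwise $P$ would be a finite, hence closed, subset of the Hausdorff space $X$, forcing $X = \overline{P} = P$ to be finite. An infinite set of periodic points cannot be a single finite orbit, so at least two distinct orbits occur. Since $O_1$ and $O_2$ are finite and disjoint and $X$ is Hausdorff, there is an entourage $W$ with $(a,b) \notin W$ for all $a \in O_1$, $b \in O_2$ (intersect the finitely many entourages separating the pairs). Choosing a symmetric $V_0$ with $V_0 \circ V_0 \subseteq W$, I claim that for every $x \in X$ at least one of the two orbits avoids $V_0[x]$: if some $a \in O_1$ and some $b \in O_2$ both lay in $V_0[x]$, then $(a,b) \in V_0 \circ V_0 \subseteq W$, a contradiction. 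Write $O(x)$ for such a far orbit and $q = q(x)$ for a corresponding periodic point.

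Now fix $x$ and a neighborhood $N$, and fix a symmetric entourage $E$ with $E \circ E \circ E \circ E \subseteq V_0$; I would claim $U_0 := E$ is a sensitivity entourage. By density, choose a periodic point $p \in E[x] \cap N$, so $(x,p) \in E$, and set $H = \Stab_G(p)$, a subgroup of finite index $m$ because $p$ is periodic. Fix left-coset representatives $s_1, \dots, s_m$ with $G = \bigsqcup_j s_j H$ and put $T = \{s_1^{-1}, \dots, s_m^{-1}\}$, a finite set with the property that every $g \in G$ satisfies $tg \in H$ for some $t \in T$. Using continuity of each map $z \mapsto tz$ and the fact that $E[tq]$ is a neighborhood of $tq$, the set $V = \bigcap_{t \in T}\{z : (tz, tq) \in E\}$ is a neighborhood of $q$. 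Applying topological transitivity to the open set $\Omega = \operatorname{int}(E[x] \cap N)$, which contains $x$, and to the interior of $V$ yields $g \in G$ and a point $y \in \Omega \subseteq N$ with $gy \in V$. Choosing $t \in T$ with $g' := tg \in H$, I get $g'p = p$ and, since $gy \in V$, also $(g'y, tq) \in E$ with $w := tq \in O(x)$.

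It remains to run the entourage triangle inequalities. By the choice of $O(x)$ we have $(x, w) \notin V_0$. Suppose, for contradiction, that both $(g'x, g'p) \in E$ and $(g'x, g'y) \in E$. Then $(g'p, g'y) \in E \circ E$; since $g'p = p$, combining $(x,p) \in E$, $(p, g'y) \in E \circ E$, and $(g'y, w) \in E$ (symmetry of $E$) gives $(x,w) \in E \circ E \circ E \circ E \subseteq V_0$, contradicting $(x,w) \notin V_0$. Hence at least one of $(g'x, g'p)$, $(g'x, g'y)$ lies outside $U_0 = E$; as both $p$ and $y$ belong to $N$, this exhibits a point of $N$ separated from $x$ by $g'$, proving sensitivity. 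The step I expect to be the main obstacle is the group-theoretic replacement for ``passing to a multiple of the period'': for a single map one simply iterates until returning to the stabilizer while keeping the correction within a bounded range, whereas for a general group one must use the finite-index stabilizer to produce the finite correction set $T$ and then invoke continuity of the finitely many maps $z \mapsto tz$ to build the neighborhood $V$ of $q$ that is ``absorbed'' near the orbit of $q$ under every $t \in T$; everything else is the entourage bookkeeping that mirrors the metric triangle inequalities.
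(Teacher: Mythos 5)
Your proposal is correct and follows essentially the same route as the paper's proof: the same two-disjoint-finite-orbits lemma with $V_0\circ V_0\subseteq W$, the same choice of a symmetric entourage $E$ with $E\circ E\circ E\circ E\subseteq V_0$, the same finite set of coset representatives of the stabilizer used to build a neighborhood of $q$ absorbed into $E[tq]$ for every representative $t$, and the same four-fold composition contradiction. The only differences are cosmetic (your $T$ consists of the inverses of the paper's representatives, and you are slightly more explicit about taking interiors before invoking transitivity and about why two disjoint periodic orbits exist).
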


In the case when $X$ is a metric space,
the preceding result was previously obtained in \cite{banks} and \cite{silverman} for $G = \Z$, and in 
\cite{kontorovich} for an arbitrary group $G$.
\par
We say that the action of a group $G$ on a uniform space $X$ is \emph{chaotic} in the sense of Devaney if it is topologically transitive and periodic points are dense in $X$ (cf. \cite{devaney}).
With this definition,  Theorem \ref{t:main-theorem}
may be rephrased by saying that any continuous action of a group $G$ on an infinite Hausdorff uniform space $X$ which is chaotic in the sense of Devaney is also sensitive.
\par
An action of a group $G$ on a topological space $X$ is called \emph{topologically mixing} if, given any two non-empty open subsets $V$ and $W$ of $X$, there exists
a finite subset $K \subset G$ such that $gV$ meets $W$ for all $g \in G \setminus K$.
It trivially follows from this definition that every action of a finite group on a topological space is topologically mixing and that every topologically mixing action of an infinite group on a topological space is topologically transitive.
We shall also establish the following result which gives another sufficient condition for sensitivity of group actions on uniform spaces.

\begin{theorem}
\label{t:mixing-implies-sensitive}
Every  topologically mixing continuous action of an infinite group on a  
Hausdorff uniform space that is not reduced to a single point is sensitive.
\end{theorem}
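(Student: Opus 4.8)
The plan is to exhibit an explicit sensitivity entourage manufactured from a single pair of separated points, and then to use topological mixing to spread an arbitrary neighbourhood simultaneously over two fixed open sets, one near each of these points; the mixing hypothesis together with the infiniteness of $G$ is exactly what lets one choose a single group element doing both jobs at once.

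First I would fix the data. Since $X$ is not reduced to a point, choose distinct points $a,b\in X$; since $X$ is Hausdorff, there is an entourage $W\in\UU$ with $(a,b)\notin W$, and a standard property of uniform structures provides a \emph{symmetric} entourage $V\in\UU$ with $V\circ V\circ V\circ V\subseteq W$, so that $(a,b)\notin V\circ V\circ V\circ V$. I claim this $V$ is a sensitivity entourage. Writing $V[z]=\{w\in X:(z,w)\in V\}$, the interiors $O_a:=\operatorname{int}V[a]$ and $O_b:=\operatorname{int}V[b]$ are nonempty open sets containing $a$ and $b$ respectively (another elementary consequence of the entourage axioms, obtained by refining $V$), and $O_a\subseteq V[a]$, $O_b\subseteq V[b]$.

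Next comes the mixing step. Let $x\in X$ and let $N$ be a neighbourhood of $x$; replacing $N$ by its interior, we may assume $N$ is a nonempty open set. Applying topological mixing to the pairs $(N,O_a)$ and $(N,O_b)$ yields finite subsets $K_a,K_b\subseteq G$ with $gN\cap O_a\neq\emptyset$ for all $g\notin K_a$ and $gN\cap O_b\neq\emptyset$ for all $g\notin K_b$. Here is where infiniteness of $G$ enters: since $G\setminus(K_a\cup K_b)$ is nonempty, fix $g$ in it. Then there exist $y_1,y_2\in N$ with $gy_1\in O_a$ and $gy_2\in O_b$, that is, $(a,gy_1)\in V$ and $(b,gy_2)\in V$. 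A short dichotomy now finishes the argument: if $(gx,gy_1)\notin V$ we are done, taking $y=y_1$. Otherwise $(gx,gy_1)\in V$, and I claim $(gx,gy_2)\notin V$, so that $y=y_2$ works; indeed, were $(gx,gy_2)\in V$ as well, symmetry of $V$ and the chain $a\to gy_1\to gx\to gy_2\to b$ would give $(a,b)\in V\circ V\circ V\circ V\subseteq W$, contradicting $(a,b)\notin W$. In every case we have produced $y\in N$ and $g\in G$ with $(gx,gy)\notin V$, so $V$ is a sensitivity entourage.

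The main obstacle, such as it is, is the bookkeeping with compositions of entourages, which plays the role of the triangle inequality in the metric proof: one must choose the number of factors of $V$ to match the length of the separating chain $a,gy_1,gx,gy_2,b$ (hence exactly four factors) and verify the elementary facts that $V$ may be taken symmetric with $V\circ V\circ V\circ V\subseteq W$ and that the relevant interiors contain $a$ and $b$. I would remark that continuity of the action does not appear to be needed in this particular argument: the proof rests only on the mixing hypothesis, the Hausdorff separation of $a$ and $b$, and the infiniteness of $G$, which is used precisely to escape the finite exceptional sets $K_a\cup K_b$.
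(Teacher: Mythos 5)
Your proof is correct and follows essentially the same route as the paper's: separate two points $a,b$ by an entourage, take a symmetric $V$ with $V\circ V\circ V\circ V$ inside it, apply mixing twice to a given neighbourhood against small neighbourhoods of $a$ and $b$, use infiniteness of $G$ to escape the union of the two finite exceptional sets, and conclude by the four-fold chain dichotomy. The only (cosmetic) differences are that you work with the interiors of $V[a]$ and $V[b]$ and do not intersect $N$ with $V[x]$ before applying mixing, and your closing remark that continuity is not actually used is accurate for the paper's argument as well.
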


For group actions on metric spaces,
the preceding result may be found in 
\cite[Proposition 7.2.14]{katok-hasselblatt-first-course}.

In Section \ref{examples} we present some examples of sensitive actions on non-metrizable uniform spaces.

% SECTION 2
\section{Background material on uniform spaces}
\label{s:background}

 Let $X$ be a set.
 \par 
 We denote by $\Delta_X$ the diagonal in $X \times X$, that is the set
$\Delta_X = \{ (x,x) : x \in X \}$.
\par
The \emph{inverse} $\overset{-1}{U}$ of a subset $U \subset X \times X$ is the subset of $X \times X$ defined by
$\overset{-1}{U} = \{ (x,y) : (y,x) \in U \}$.
One says that $U$ is \emph{symmetric} if $\overset{-1}{U} = U$.
Note that $U \cap \overset{-1}{U}$ is symmetric for any $U \subset X \times X$.
\par  
 We define the \emph{composite} $U \circ V$ of two subsets $U$ and $V$ of $X \times X$  by
  $$
U \circ V = \{ (x,y): \text{ there exists  } z \in X \text{  such that  } (x,z) \in U \text{  and  } (z,y) \in V \}  \subset X \times X.
$$

\begin{definition}
Let $X$ be a set. A \emph{uniform structure}\index{uniform ! --- structure} on $X$ is a non--empty set $\UU$ of subsets of $X \times X$ satisfying the following conditions:
\begin{enumerate}[(UN-1)]
\item if $U \in \UU$, then $\Delta_X \subset U$;
\item if $U \in \UU$ and $U \subset V \subset X \times X$, then $V \in \UU$;
\item if $U \in \UU$ and $V \in \UU$, then $U \cap V \in \UU$;
\item if $U \in \UU$, then $\overset{-1}{U} \in \UU$;
\item if $U \in \UU$, then there exists $V \in \UU$ such that 
$V \circ V \subset U$.
\end{enumerate}
The elements of $\UU$ are then called the \emph{entourages} of the uniform structure and the set $X$ is called a \emph{uniform space}.
\end{definition}

 Note that conditions (UN-3), (UN-4), and (UN-5) imply that, for any entourage $U$ there exists a symmetric entourage $V$ such that $V \circ V \subset U$.
\par
Let $X$ be a set and let   $U \subset X \times X$. 
Given a point $x \in X$, we define the subset $U[x] \subset X$ by
$U[x] = \{y \in X : (x,y) \in U \}$.
\par
If $X$ is a uniform space,
there is an induced topology on $X$ characterized by the fact that the neighborhoods of an arbitrary point $x \in X$ consist of the sets $U[x]$, where $U$ runs over all entourages of $X$.
This topology is Hausdorff if and only if the intersection of all the entourages of $X$ is reduced to the diagonal $\Delta_X $.
\par
 If $(X,d)$ is a metric space, there is a natural uniform structure on $X$ whose entourages are the sets $U \subset X \times X$ satisfying the following condition:
there exists a real number $\varepsilon >0$ such that
$U$ contains all pairs $(x,y) \in X \times X$ such that  $d(x,y) < \varepsilon$.
The topology associated with this uniform structure is then the same as the topology induced by the metric.

Uniform structures were introduced by Andr\'e Weil \cite{weil-uniforme}.
The reader is referred to \cite[Ch. 2]{bourbaki}, \cite[Ch. 6]{kelley}, and \cite{james}
for a detailed exposition of the general theory of uniform spaces. See also
\cite[Appendix B]{ca-and-groups-springer}.

% SECTION 3
\section{Proofs} 

\begin{lemma}
\label{l:finite-sets-bounded-away}
Let $A$ and $B$ be two disjoint finite subsets of a Hausdorff uniform space $X$.
Then there exists an  entourage $W$ of $X$ such that $A \times B$ does not meet $W$. 
 \end{lemma}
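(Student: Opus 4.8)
The plan is to exploit the Hausdorff separation property of the uniform structure, namely that the intersection of all entourages of $X$ equals the diagonal $\Delta_X$, in order to separate the two finite sets one pair of points at a time and then combine the finitely many resulting entourages into a single one.

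First I would observe that, since $A$ and $B$ are disjoint, every pair $(a,b)$ with $a \in A$ and $b \in B$ satisfies $a \neq b$, so that $(a,b) \notin \Delta_X$. Because $X$ is Hausdorff, the intersection of all entourages of $X$ is reduced to $\Delta_X$; consequently, for each such pair $(a,b)$ there must exist at least one entourage $W_{a,b}$ with $(a,b) \notin W_{a,b}$, for otherwise $(a,b)$ would belong to every entourage and hence to the diagonal, contradicting $a \neq b$.

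Next, since $A$ and $B$ are finite, the product $A \times B$ is a finite set of pairs, and so the family $\{W_{a,b} : (a,b) \in A \times B\}$ is finite. I would then set $W = \bigcap_{(a,b) \in A \times B} W_{a,b}$; by repeated application of axiom (UN-3), this finite intersection of entourages is again an entourage of $X$ (the degenerate case where $A$ or $B$ is empty being trivial, as then $A \times B = \emptyset$ and any entourage works).

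Finally I would verify that $W$ has the required property: for every $(a,b) \in A \times B$ we have $W \subseteq W_{a,b}$ and $(a,b) \notin W_{a,b}$, whence $(a,b) \notin W$; therefore $(A \times B) \cap W = \emptyset$, i.e. $A \times B$ does not meet $W$. I do not expect any serious obstacle here: the only two points requiring care are the invocation of the Hausdorff characterization of the uniform structure to produce the separating entourages $W_{a,b}$, and the use of (UN-3) to guarantee that their finite intersection remains an entourage.
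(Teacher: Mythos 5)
Your proof is correct and follows essentially the same route as the paper's: separate each pair $(a,b) \in A \times B$ by an entourage using the Hausdorff characterization, then take the finite intersection via (UN-3). The paper's version is just more terse; your extra remarks on the degenerate case and the final verification are fine but not needed.
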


\begin{proof}
As $X$ is Hausdorff, we can find, for all $a \in A$ and $b \in B$,
an entourage $V_{a,b}$ of $X$ such that $(a,b) \notin V_{a,b}$.
Then the entourage
$$
W = \bigcap_{(a,b) \in A \times B} V_{a,b}
$$
 has the required property.
\end{proof}

\begin{proof}[Proof of Theorem \ref{t:main-theorem}]
We first claim that we can find an entourage $V$ of $X$ such that, for all $x \in X$, there
exists a finite orbit $C \subset X$ that does not meet $V[x]$.
   Indeed, the hypotheses that $X$ is Hausdorff, infinite, and contains a dense set of periodic points, imply that we can find two disjoint finite orbits $A$ and $B$ in $X$.
By Lemma \ref{l:finite-sets-bounded-away},
there is  an entourage $W$ of $X$ such that $A \times B$ does not meet $W$.
Now, if we take a symmetric    entourage $V$ satisfying $V \circ V \subset W$,
then there is no $x \in X$ such that $ A$ and $ B$ both meet $V[x]$.
 This proves the claim.
\par
Let $V$ be an entourage of $X$ satisfying the conditions of the preceding claim and
let $U$ be a symmetric  entourage of $X$ such that
\begin{equation}
\label{e:def-U}
U \circ U \circ U \circ U 
\subset V.
\end{equation}
Let us show that $U$ is a sensitivity entourage for the action of $G$ on $X $.
 Let $x \in X$ and let $N$ be a neighborhood of $x$.
\par
As periodic points are dense in $X$, we can find a periodic point $p \in X$ such that
\begin{equation}
\label{e:p-belongs-N-U-x}
p \in N \cap U[x].
\end{equation}
Denote by $H$ the stabilizer of $p$ in $G$    and let $T\subset G$
be a complete set of representatives for the left cosets of $H$ in $G$, so that we have 
$G = \coprod_{t \in T} tH$.
Note that $T$ is finite since $H$ is of finite index in $G$.
\par
As $V$ satisfies our first claim, 
we can find a finite orbit $C$ 
such that
\begin{equation}
\label{e:C-far-from-x}
C \cap V[x] = \varnothing.
\end{equation}
Choose an arbitrary point  $q \in C$. Then observe that the set
 $$
I = \bigcap_{t \in T}  t U[t^{-1}q]
$$
is a neighborhood of $q$ since it is a finite intersection of neighborhoods of $q$.
\par
As the action of $G$ on $X$ is topologically transitive, 
we can find a point
$z \in N \cap U[x]$ and an element $g_0 \in G$ such that $g_0z \in I$.
The element $g_0$ can be uniquely written in the form $g_0 = t_0h_0$, where $t_0 \in T$ is the representative of the class $g_0 H$ and $h_0 \in H$.
We have
\begin{equation}
\label{e:h0z-near}
h_0z = t_0^{-1}g_0z \in t_0^{-1}I \subset t_0^{-1}(t_0U[t_0^{-1}q]) = U[t_0^{-1}q],
\end{equation}
so that
\begin{equation}
(h_0z,t_0^{-1}q) \in U.
 \end{equation}
 We now claim  that we always have
 \begin{equation}
 \label{e:alternative}
(h_0x,p) \notin U \quad \text{or} \quad (h_0x,h_0z) \notin U.
 \end{equation}
 Indeed, suppose on the contrary that
$ (h_0x,p)$ and $(h_0x,h_0z)$ both belong to $U$.
Since $U$ is symmetric, this would imply $(p,h_0z) \in U \circ U$, and hence
$(x,t_0^{-1}q) \in U \circ U \circ U \circ U$ since $(x,p) \in U$ by \eqref{e:p-belongs-N-U-x} and 
$(h_0z,t_0^{-1}q) \in U$ by \eqref{e:h0z-near}.
This would contradict \eqref{e:C-far-from-x}
because $U \circ U \circ U \circ U  \subset V$
by \eqref{e:def-U} and $t_0^{-1}q \in C$.
\par
Observe now that $h_0p = p$ since $h_0 \in H$.
Thus, we deduce
from \eqref{e:alternative}  that we can always find  a point $y \in N$ and an element $g \in G$ such that $(gx,gy) \notin U$.
Indeed,
we can take $g = h_0$ and either $y = p$ or  $y = z$. 
\end{proof}

\begin{proof}[Proof of Theorem \ref{t:mixing-implies-sensitive}]
Let $X$ be a  Hausdorff uniform space equipped with a continuous and
topologically mixing action of an infinite group $G$.
Suppose that  $x_1$ and $x_2$ are two distinct points in $X$.
Since $X$ is Hausdorff we can find an entourage $V$ of $X$ such that
\begin{equation}
\label{x-1-x-2-not-in-V}
(x_1,x_2) \notin V.
\end{equation} 
Let $U$ be a symmetric entourage of $X$ such that
\begin{equation}
\label{u-4-in-v}
U \circ U \circ U \circ U \subset V.
\end{equation}
Let us show that $U$ is a sensitivity entourage for the action of $G$ on $X$.
Let $x \in X$ and let $N$ be a neighborhood of $X$.
As the action of $G$ on $X$ is topologically mixing, we can find, for $i=1,2$, 
a finite set $F_i \subset G$ such that we have
$$
g(N \cap U[x]) \cap U[x_i] \neq \varnothing
$$
for all $g \in G \setminus F_i$.
Since $G$ is infinite, the set $G \setminus (F_1 \cup F_2)$ is not empty.
Choose an element $g \in G \setminus (F_1 \cup F_2)$.
Then we can find
$y_1, y_2 \in N \cap U[x]$ such that
\begin{equation}
\label{gy-i-U-x-i}
gy_1 \in U[x_1] \quad \text{and} \quad gy_2 \in U[x_2].
\end{equation}
From  \eqref{gy-i-U-x-i}, \eqref{u-4-in-v}, and \eqref{x-1-x-2-not-in-V} we deduce that 
$$
(gy_1,gy_2) \notin U \circ U
$$
and therefore
\begin{equation}
\label{alternative}
(gx,gy_1) \notin U \quad \text{or} \quad (gx,gy_2) \notin U.
\end{equation}
It follows from \eqref{alternative} that we can always find a point $y \in N$ (namely, $y=y_1$ or $y=y_2$) and an element $g \in G$ such that $(gx,gy) \notin U$.
\par
This shows that $U$ is a sensitivity entourage for the action of $G$ on $X$.
\end{proof} 

%SECTION 4
\section{Examples from symbolic dynamics}
\label{examples}
Symbolic dynamics provides many interesting examples of sensitive actions on non-metrizable uniform spaces.
Indeed, let $A$ be a (possibly infinite) set having more than one element and let $G$ be an infinite group.
Consider the set $A^G$ consisting of all maps $x \colon G \to A$.
The \emph{shift} on $A^G$ is the action of $G$ on $A^G$  defined by
$gx(h) = x(g^{-1}h)$ for all $g,h \in G$ and $x \in A^G$.
We equip $A^G$ with its \emph{prodiscrete} uniform structure. This is the uniform structure admitting as a base of entourages the sets
$$
W(\Omega)= \{(x,y) \in A^G \times A^G : x\vert_\Omega = y\vert_\Omega\},
$$
where $\Omega$ runs over all finite subsets of $G$ (see \cite{ca-and-groups-springer}).
Then the shift action on $A^G$ is continuous and expansive (the set $W(\{1_G\})$ is an expansivity entourage). As $A^G$ is perfect, the shift on $A^G$ is sensitive.
Moreover, if $X \subset A^G$ is a $G$-invariant subset, then the restriction 
of the shift to $X$ is sensitive if and only if $X$ is perfect.

If $G$ is uncountable then $A^G$ is not metrizable (not even first countable).

Recall that a group is called \emph{residually finite} if the intersection of its subgroups of finite index is reduced to the identity element. In other words, residually finite groups are those groups whose elements can be distinguished after taking finite quotients.
The class of residually finite groups includes all virtually polycyclic groups 
(and in particular all finite groups, all finitely generated nilpotent groups and therefore all finitely generated abelian groups), all free groups, and all finitely generated linear groups; moreover, it is closed under the operations of taking: subgroups, direct products, and projective limits. In particular,
the following constitute examples of uncountable residually finite groups (see, e.g., \cite[Section 2]{ca-and-groups-springer}):

\begin{itemize}
\item{any infinite direct product of non-trivial residually finite 
(e.g. of non-trivial finite) groups;}
\item{the group $Aut(T_2)$ of automorphisms of the infinite binary rooted tree $T_2$;}
\item{the group $\Z_p$ of $p$-adic integers, for $p$ any prime number;}
\item{any uncountable free group.}
\end{itemize}

It is well known that periodic points are dense in $A^G$ if and only if the group $G$ is residually finite (see for example \cite[Theorem 2.7.1]{ca-and-groups-springer}). Collecting together these facts, we have:

\begin{proposition} 
Let $G$ be an infinite group and $A$ a (possibly infinite) set having more than one element. Then the shift on $A^G$ is chaotic in the sense of Devaney if and only if $G$ is residually finite. Moreover, if $G$ is uncountable then $A^G$ is not metrizable. \qed
\end{proposition}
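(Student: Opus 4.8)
The plan is to assemble the ingredients already recorded in this section and to supply the one genuinely dynamical fact that is not yet quoted, namely that for an infinite group the shift is automatically topologically transitive. By definition the shift is chaotic in the sense of Devaney exactly when it is topologically transitive and its periodic points are dense, and the equivalence between density of periodic points and residual finiteness of $G$ is the cited result \cite[Theorem 2.7.1]{ca-and-groups-springer}. Hence the whole biconditional will reduce to showing that topological transitivity holds unconditionally (for $G$ infinite and $A$ with more than one element). I would in fact prove the stronger statement that the shift is topologically mixing and then invoke the observation from the introduction that, for an infinite group, topological mixing implies topological transitivity.

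To prove mixing I would start from the fact that the sets $W(\Omega)[x]$ form a neighborhood base, so a typical non-empty basic open set $V$ is a cylinder $\{x \colon x\vert_{\Omega_1}=u\}$ determined by a finite $\Omega_1\subset G$ and a pattern $u\colon\Omega_1\to A$. A direct computation with $gx(h)=x(g^{-1}h)$ shows that the translate $gV$ is again a cylinder, now supported on $g\Omega_1$. Consequently, given two non-empty basic open sets $V$ (supported on $\Omega_1$) and $W$ (supported on $\Omega_2$), the intersection $gV\cap W$ is non-empty as soon as the supports are disjoint, i.e.\ whenever $g\Omega_1\cap\Omega_2=\varnothing$: one simply prescribes the two non-conflicting patterns on $g\Omega_1\cup\Omega_2$ and extends arbitrarily elsewhere. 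The set of $g$ for which the supports meet is contained in the finite set $\{\omega_2\omega_1^{-1}\colon\omega_1\in\Omega_1,\ \omega_2\in\Omega_2\}$, so $gV\cap W\neq\varnothing$ for all $g$ outside a finite subset of $G$. This is precisely topological mixing, and combining it with the introductory observation gives topological transitivity for infinite $G$; therefore the shift is chaotic in the sense of Devaney if and only if its periodic points are dense, which by the cited theorem happens if and only if $G$ is residually finite.

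For the metrizability clause I would show that $A^G$ fails to be first countable when $G$ is uncountable, which already rules out metrizability since metrizable spaces are first countable. Assuming for contradiction a countable neighborhood base $(N_n)_n$ at some point $x$, each $N_n$ contains a basic neighborhood $W(\Omega_n)[x]$ with $\Omega_n$ finite, so $\Omega=\bigcup_n\Omega_n$ is countable. Choosing $g_0\in G\setminus\Omega$ (possible since $G$ is uncountable) and $a\in A$ with $a\neq x(g_0)$ (possible since $A$ has more than one element), one checks that no $N_n$ can be contained in $W(\{g_0\})[x]$, because each $W(\Omega_n)[x]$ contains a point agreeing with $x$ on $\Omega_n$ but taking the value $a$ at $g_0$. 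This contradicts the neighborhood-base property, so $A^G$ is not first countable and hence not metrizable.

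The only substantive step is the mixing argument of the second paragraph; I expect the main obstacle to lie in handling the translation of cylinders cleanly, since everything else is bookkeeping around the definition of Devaney chaos and the two facts already quoted.
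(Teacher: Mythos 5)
Your proof is correct and follows essentially the same route the paper intends: the paper gives no written proof (it simply ``collects together'' the cited equivalence between density of periodic points and residual finiteness, together with the standard facts that the full shift is topologically transitive and that $A^G$ over an uncountable group is not first countable), and you have supplied exactly those standard details, correctly. Both your mixing argument for cylinders and your first-countability argument are sound.
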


Let $G$ be a group and $A$ a finite set. A closed $G$-invariant subset of $A^G$ is called a
\emph{subshift}. 

One says that a subshift $X \subset A^G$ is \emph{of finite type} if there exist a finite subset $\Omega \subset G$ and a subset $\PP \subset A^\Omega := \{p \colon \Omega \to A\}$ such that $X$ consists of all  $x \in A^G$ for which the restriction of the configuration $g x$ to 
$\Omega$ belongs to $\PP$ for all $g \in G$.

One says that a subshift $X \subset A^G$ is \emph{strongly irreducible} if there exists a finite subset  $\Delta \subset G$  such that,   
if $\Omega_1$ and $\Omega_2$ are finite subsets of $G$ which are sufficiently far apart in the sense that the sets $\Omega_1$ and $\Omega_2\Delta$ do not meet, 
then, given any two configurations $x_1$ and $x_2$ in $X$, there exists a configuration $x \in X$ which coincides with $x_1$ on $\Omega_1$ and with $x_2$ on $\Omega_2$.

One says that a subshift $X \subset A^G$ is \emph{topologically mixing} if the 
action of the $G$-shift on $X$ is topologically mixing.
This is equivalent to the following condition: for any finite subset $\Omega 
\subset G$ and any two configurations $x_1, x_2 \in X$, there exists a finite 
subset $F \subset G$ such that, for all $g \in G \setminus F$, there exists a 
configuration $x \in X$ which coincides with $x_1$ on $\Omega$ and with 
$x_2$ on $g\Omega$.

We remark that every strongly irreducible subshift is topologically mixing (see for example
\cite[Proposition 3.3]{myhill}) but there exist topologically mixing subshifts which are not 
strongly irreducible (e.g. the \emph{Ledrappier subshift}, see \cite[Remark 2.6]{dppsisft}).
However, it is known that every topologically mixing sofic subshift over $\Z$ is strongly 
irreducible (see for example \cite[Corollary 1.3]{myhill}).

We then have:

\begin{proposition} 
\label{propo2}
Let $G$ be an infinite residually finit group and $A$ a finite set. 
Suppose that $X \subset A^G$ is a strongly irreducible subshift of finite type
containing at least one periodic configuration.
Then, unless $X$ is reduced to a single configuration,  
the $G$-shift on $X$ is chaotic in the sense of Devaney.
\end{proposition}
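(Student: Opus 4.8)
The plan is to verify separately the two ingredients in Devaney's definition: topological transitivity and density of periodic configurations. Transitivity is immediate. Since $X$ is strongly irreducible it is topologically mixing (the implication recorded just above, \cite[Proposition 3.3]{myhill}), and since $G$ is infinite every topologically mixing action is topologically transitive, as noted in the Introduction. Moreover, once $X$ contains at least two configurations, strong irreducibility allows one to glue differing local patterns arbitrarily far apart, so $X$ is infinite and has no isolated points; this is why the degenerate case of a single configuration is excluded, and it is also what makes the action sensitive through Theorem~\ref{t:main-theorem} once density has been established. Thus the whole content lies in proving that periodic configurations are dense.

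For density I would fix a configuration $x \in X$ and a finite window $\Lambda \subseteq G$ and construct a periodic configuration $y \in X$ with $y\vert_\Lambda = x\vert_\Lambda$. Write $X$ as a subshift of finite type with memory set $M$ and admissible patterns $\PP \subseteq A^M$, and let $\Delta$ be a strong irreducibility constant for $X$; enlarging $\Delta$, assume $1 \in \Delta = \Delta^{-1}$. Let $x_0 \in X$ be the given periodic configuration and let $H_0 \trianglelefteq G$ be a finite-index normal subgroup with $h x_0 = x_0$ for all $h \in H_0$ (for instance the normal core of $\Stab_G(x_0)$), so that $x_0$ is constant on the right cosets of $H_0$. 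Put the finite buffer $Q = \Lambda\Delta$ and $Q' = QM^{-1}M$. Using residual finiteness, choose a finite-index normal subgroup $H \leq H_0$ such that distinct elements of $Q'$ lie in distinct right cosets of $H$, i.e. $H \cap Q'(Q')^{-1} = \{1\}$, and fix a transversal $D$ for the right cosets $H\backslash G$ with $Q' \subseteq D$.

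Next I would glue. Since $\Lambda$ and $D \setminus Q = D \setminus \Lambda\Delta$ are $\Delta$-apart (if $\lambda = g\delta$ with $\lambda \in \Lambda$, $\delta \in \Delta$ then $g = \lambda\delta^{-1} \in \Lambda\Delta$), strong irreducibility produces $w \in X$ with $w\vert_\Lambda = x\vert_\Lambda$ and $w\vert_{D \setminus Q} = x_0\vert_{D \setminus Q}$. Define $y$ to be the unique $H$-periodic configuration with $y\vert_D = w\vert_D$, that is $y(hd) = w(d)$ for $h \in H$ and $d \in D$; since $H$ has finite index, $y$ is periodic, and $y\vert_\Lambda = x\vert_\Lambda$ because $\Lambda \subseteq D$. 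It remains to check $y \in X$, and it suffices to show that each translate window $y\vert_{gM}$ coincides with the corresponding window of a configuration already known to lie in $X$. If $gM$ meets a tile $hQ$, then $g \in hQM^{-1}$, hence $gM \subseteq hQ' \subseteq hD$; on that tile $y = hw$, so $y\vert_{gM}$ is a window of $hw \in X$. Otherwise $gM$ avoids $\bigsqcup_{h\in H} hQ$, and there $y$ agrees with $x_0$ (on $hD \setminus hQ$ one has $y = hw = hx_0 = x_0$, using $H \leq H_0$), so $y\vert_{gM}$ is a window of $x_0 \in X$. Hence every window of $y$ is admissible and $y \in X$.

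The crux is exactly this verification that $y$ is simultaneously periodic and admissible: the danger is an admissibility condition whose support straddles the boundary between two translates of the fundamental domain. The choice of $x_0$ and of $H$ is what neutralises it — using the periodic background $x_0$ guarantees that away from the finitely many modified patches the configuration equals $x_0 \in X$ outright, while taking $H$ fine enough (possible by residual finiteness) forces the entire buffer $Q'$ into a single transversal cell, so that no admissibility window touching a modified patch can reach a neighbouring cell. I expect the bookkeeping of the separation and buffer sets $Q = \Lambda\Delta$ and $Q' = QM^{-1}M$, together with the two window cases, to be the only delicate point; everything else is a direct application of the existence of a periodic configuration (which is \emph{essential}, since strongly irreducible subshifts of finite type may otherwise be aperiodic), of strong irreducibility (for both the mixing and the gluing), of the finite type condition (which reduces membership in $X$ to a local, translate-by-translate check), and of the residual finiteness of $G$.
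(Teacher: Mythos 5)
Your treatment of transitivity and perfectness coincides with the paper's: strong irreducibility gives topological mixing, hence (for infinite $G$) transitivity, and a strongly irreducible subshift with more than one point is perfect. Where you genuinely diverge is on the density of periodic configurations: the paper disposes of this in one line by citing \cite[Theorem 1.1]{dppsisft}, whereas you reprove that result from scratch with an explicit periodization. Your construction is essentially sound: choosing $H\leq H_0$ by residual finiteness so that the buffer $Q'=\Lambda\Delta M^{-1}M$ injects into $H\backslash G$, gluing $x\vert_\Lambda$ against the periodic background $x_0$ inside one transversal cell, and then checking admissibility window by window (a window meeting some $hQ$ lies entirely in $hQ'\subset hD$ and reads off $hw$; a window missing every $hQ$ reads off $x_0$) is exactly the right way to keep the finite-type conditions from straddling cell boundaries. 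What the paper's route buys is brevity and greater generality (the cited theorem is the real content, proved elsewhere); what yours buys is a self-contained argument that makes visible precisely where each hypothesis --- residual finiteness, the finite-type condition, strong irreducibility, and the existence of \emph{one} periodic configuration --- enters.

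Two small repairs are needed. First, the paper's definition of strong irreducibility only permits gluing along \emph{finite} sets $\Omega_1,\Omega_2$, while you glue $x\vert_\Lambda$ against $x_0$ on the infinite set $D\setminus Q$; you must either pass to a limit by compactness of $X$ (glue over an exhaustion of $D\setminus Q$ by finite sets and take a cluster point of the resulting configurations, which exists since $A$ is finite and $X$ is closed), or, more economically, glue only on the finite set $Q'\setminus Q$ and define $y$ on $hD\setminus hQ'$ directly by $x_0$ --- the window analysis goes through unchanged. Second, a cosmetic point: the conclusion of the Proposition is only Devaney chaos (transitivity plus dense periodic points), so the sensitivity remark via Theorem \ref{t:main-theorem} is a bonus rather than part of the claim; the paper obtains it even more cheaply from expansivity of the shift together with perfectness of $X$ (or from Theorem \ref{t:mixing-implies-sensitive}).
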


\begin{proof}
Clearly every strongly irreducible subshift that is not reduced to a single 
configuration is perfect. As the shift action on $X$ is expansive, 
it follows that it is sensitive
(this may also be deduced from  Theorem \ref{t:mixing-implies-sensitive} 
since every strongly irreducible subshift is topologically mixing).
Density of periodic configurations in $X$ follows from \cite[Theorem 1.1]{dppsisft}. 
\end{proof}

\noindent
{\bf  Acknowledgments.} We express our deepest gratitude to the referee for her/his most
careful reading of our manuscript and for several useful comments and remarks.

\end{document}